\begin{document}

\title*{On $(h,k)-$ dichotomy of linear discrete-time systems in Banach spaces}
\author{ Violeta Crai and Mirela Aldescu}
\institute{ Violeta Crai \at Department of Mathematics
West University of Timi\c soara,
4 V. P\^arvan Blvd.
300223 Timi\c soara
Romania \email{vio.terlea@gmail.com}
\and  Mirela Aldescu \at  Department of Mathematics
West University of Timi\c soara,
4 V. P\^arvan Blvd.
300223 Timi\c soara
Romania \email{mirelaaldescu@yahoo.com}}
%
%
\maketitle

\abstract*{The paper considers a general concept of dichotomy with different growth rates for linear discrete-time systems in Banach spaces. Characterizations in terms of Lyapunov type sequences of norms are given. The approach is illustrated by various examples.}

\abstract{The paper considers a general concept of dichotomy with different growth rates for linear discrete-time systems in Banach spaces. Characterizations in terms of Lyapunov type sequences of norms are given. The approach is illustrated by various examples.}
\section{Introduction}

The notion of (uniform) exponential dichotomy for difference equations was introduced in the literature by T. Li \cite{li} and 
plays a central role in the theory of dynamical system such as, for example, in the study of stable and unstable manifolds and in many aspects of the theory of stability. We note that the theory of exponential dichotomies and its applications are very much developed.

Early results in the study of dichotomies for
difference equations appeared in the paper of C.V. Coffman and J.J. Schaffer \cite{cofman}.
Later, in 1981,  D. Henry included discrete dichotomies in his book \cite{henry}. These
were followed by the monographs due to R.P. Agarwal \cite{argwal}, C. P\"otzsche \cite{potzche} and S. Elaydi \cite{elaydi} (deals with ordinary dichotomy).

Lately, characterizations of the nonuniform exponential
dichotomy for discrete linear systems can be found in the works
of  M. Megan,  T. Ceau\c{s}u,  A.L. Sasu, B. Sasu, L. Popa,  M.G. Babu\c{t}ia   and colleagues (see ~\cite{babutia-megan, megan-sasi, popa}).
 In 2009 A. Bento and C. Silva introduced a new concept of dichotomy called polynomial dichotomy \cite{bento}. N.M. Seimeanu in \cite{seimeanu} studied connections between different concepts of polynomial dichotomy.

 A natural generalization of both exponential and polynomial
dichotomy is successfully modeled by the concept of $(h,k)-$ dichotomy
introduced by M. Pinto \cite{pinto} for invertible difference equations. Two years later M. Megan (\cite{megan}) developed his research  and lately the concept was intensively studied in its various forms: uniform and nonuniform, strong and weak ~\cite{babutia-megan-popa, vio-saci}.  
In this paper we consider the general and more realistic case of a non-invertible dynamics as well as of
a nonuniform $(h,k)-$ dichotomy (this concept is a generalization of a dichotomy concept studied by L. Barreira and C. Valls in \cite{ba0}).
Both notions are ubiquitous in ergodic theory. 

Our approach consists in reducing
the study of the dynamics to one with uniform  behavior with
respect to a sequence of norms.  
This allows one to take profit of the existing theory and methods for a dynamics
with a uniform behavior in order to transfer some of the information
to the original dynamics.

It is difficult to indicate an original reference for considering sequences of
norms in the classical uniform theory (both for discrete and continuous time), but in
the nonuniform theory it first occurred in Pesin's work \cite{pesin}. 
 L. Barreira, D. Dragi\v{c}evi\'{c} and C. Valls \cite{ba0} obtained characterizations of the concept of exponential dichotomy in terms of admissibility using characterizations with sequences of norms. Our characterizations of $(h,k)-$ dichotomy with respect to a sequence of norms is motivated
by their approach and it is in the same time a generalization. 

The aim of this paper is to give characterizations of the concept using sequences of norms. The most important result is the equivalence between the concept of $(h,k)-$ dichotomy and a certain type of uniform $(h,k)-$ dichotomy with respect to a sequence of norms. As an application of this result, we give a characterization of Barbashin type  for the concept  and necessary and sufficient conditions of Datko type (see also \cite{claudia-datko-discret}). 
\section{Definitions, examples and counterexamples}
Let $X$ be a Banach space and $\mathcal{B}(X)$ the Banach algebra of all bounded linear operators on $X$. The norms on $X$ and on $\mathcal{B}(X)$ will be denoted by $\|\cdot \|$. The identity operator on $X$ is denoted by $I$. We also denote by
\begin{align*}
\Delta=\{(m,n)\in \mathbb{N}^2: m\geq n\}, \text{ where } \mathbb{N} \text{ denotes the nonnegative integers.}
\end{align*} 
We consider the linear difference system
\begin{align}
(\mathcal{A})&    &x_{n+1}=A_nx_n,
\end{align}
where $A:\mathbb{N}\to\mathcal{B}(X)$ is a given sequence. \par 
For $(m,n)\in \Delta$ we define:

\[ A_m^n=\left\{ \begin{array}{ll} A_{m-1}.....A_n,&\mbox{if $m>n$} \\
I,& \mbox{ if $m=n$}.
\end{array} \right. \]
\begin{definition}
	The map $A:\Delta\to\mathcal{B}(X)$ defined above is called
	the evolution operator associated to the system $(\mathcal{A})$.
\end{definition}
\begin{remark}
	If the sequence $(x_m)$ is a solution of $(\mathcal{A})$, then
	\begin{eqnarray}
	x_m=A_m^nx_n, &\text{for all}& (m,n)\in\Delta.
	\end{eqnarray}
\end{remark} 

\begin{definition}
	A sequence $P:\mathbb{N}\to\mathcal{B}(X),P(n)=P_n$, for all $n\in\mathbb{N}$  is called
	\begin{enumerate}
		\item \textit{projectors sequence} on X if $P^2_n=P_n$  for all $n\in\mathbb{N}$;
		\item \textit{invariant} for $(\mathcal{A})$ if $A_nP_n=P_{n+1}A_n$, for all $n\in \mathbb{N}$;
		\item \textit{strongly invariant} for $(\mathcal{A})$ if is invariant and the restriction of  $A_m^n$ to Ker $P_n$ is an isomorphism from Ker $P_n$ to Ker $P_m$.
	\end{enumerate}
\end{definition}
\begin{example}
	If a sequence $P:\mathbb{N}\to\mathcal{B}(X)$ is a projectors sequence invariant for $(\mathcal{A})$, then the sequence $Q:\mathbb{N}\to\mathcal{B}(X)$ given by
	\begin{equation}
	Q_n=I-P_n
	\end{equation}
	is also a projectors sequence invariant for $(\mathcal{A})$, called the \textit{complementary projectors sequence} of $(P_n)$.
\end{example}
\begin{remark} 
	It is easy to see that $(P_n)$ is invariant for $(\mathcal{A})$ if and only if
	\begin{align*}
	A_m^nP_n=P_mA_m^n, \text{ for all } (m,n)\in\Delta.
	\end{align*}	
\end{remark}
\begin{remark}\label{rem1}
	If the sequence $(P_n)$ is strongly invariant to $(\mathcal{A})$, then there exists $B:\Delta\to\mathcal{B}(X)$, such that $B_m^n$ is an isomorphism from Ker $P_m$ to Ker $P_n$ and 	
	\begin{enumerate}
		\item $A_m^nB_m^nQ_m=Q_m$
		\item $B_m^nA_m^nQ_n=Q_n$
		\item $B_m^nQ_m=Q_nB_m^nQ_m$
		\item $Q_m=B_m^m Q_m=Q_mB_m^mQ_m$
		\item $B_m^{m_0} Q_m=B_n^{m_0}B_m^nQ_m,$
	\end{enumerate}  
	for all $(m,n),(n,m_0)\in\Delta$, where $(Q_n)$ is the complementary projectors sequence of $(P_n).$\par 
	The map $B$ is called the skew-evolution operator associated to the pair
	$(\mathcal{A},P)$.
\end{remark}
\begin{proof}
	See \cite{babutia-megan-continu}.
	\qed 
	\end{proof}
\begin{definition}
	An increasing sequence $h:\mathbb{N}\to[1,\infty)$ is called \textit{a growth rate} if $$\lim\limits_{n\to\infty}h_n=\infty$$.
\end{definition} 
Let $h,k:\mathbb{N}\to[1,\infty)$ be two growth rates and $(P_n)$ a  projectors sequence invariant for $(\mathcal{A})$.        
\begin {definition}\label{def1}     
The pair $(\mathcal{A},P) $ is \textit{ $(h,k)$-dichotomic} (and we denote by $(h,k)-d.$) if there exists a nondecreasing sequence $  d:\mathbb{N}\rightarrow [1,\infty)$ such that
\begin{enumerate}
	\item[$(hd_1)$] $h_m || A_m^nP_nx|| \leq d_n h_n  ||P_nx||$
	\item[$(kd_1)$] $ k_m || Q_nx|| \leq d_m  k_n ||A_m^nQ_nx||,$
\end{enumerate}
for all $ (m,n,x)\in\Delta\times X$, where $(Q_n)$ is the complementary sequence of $(P_n).$
\end{definition}
\begin{remark}
As particular cases of $(h,k)$-dichotomy we have:
\begin{enumerate}
	\item If the sequence $(d_n)$ is constant then we obtain the uniform- $(h,k)$ -dichotomy, denoted by $u.-(h,k)-d.$
	\item For $h_m=e^{\alpha m}, k_m=e^{\beta m} $ where $\alpha,\beta>0$ it  results the  exponential dichotomy concept, denoted by $e.d.$
	\item If $h_m=(m+1)^\alpha, k_m=(m+1)^\beta,$ where $\alpha,\beta>0$ then we obtain the polynomial dichotomy denoted by $p.d.$
\end{enumerate}	
\end{remark}
\begin{remark}\label{rem-uniform-strong-implica-strong}
	If the pair $(\mathcal{A},P)$ is $u.-(h,k)-d.$ then it is $(h,k)-d.$, but the reverse is not always true, as it results from the following example.
\end{remark}
\begin{example}\label{ex-unif-care-nu-este-dicho}
Let $X=\mathbb{R}^2$ endowed with the norm $\|(x_1,x_2)\|=\max\{|x_1|,|x_2|\}$	and the dynamical system $(\mathcal{A})$ defined by the sequence 
\begin{equation*}
A_n= \frac{1+\ln a_{n}}{1+\ln a_{n+1}}\left( \frac{h_n}{h_{n+1}}P_n+\frac{k_{n+1}}{k_n}Q_{n+1}\right) ,
\end{equation*}
where $a,h,k:\mathbb{N}\to[1,\infty)$ are growth rates and the sequences of projectors $P,Q:\mathbb{N}\to\mathcal{B}(X)$ are given by 
\begin{eqnarray*}
	P_n(x_1,x_2)=(x_1+a_nx_2,0)&\text{and }& Q_n(x_1,x_2)=(-a_nx_2,x_2).	
\end{eqnarray*}
It follows that the evolution operator associated to the system $(\mathcal{A})$ is
\begin{eqnarray*}
	A_m^n= \frac{1+\ln a_{n}}{1+\ln a_{m}}\left( \frac{h_n}{h_{m}}P_n+\frac{k_{m}}{k_n}Q_{m}\right) ,&\text{for all}&(m,n)\in\Delta.
\end{eqnarray*}
We observe that 
\begin{align*}
P_mP_nx=P_nx,& & Q_mQ_nx=Q_mx,&&
\|P_nx\|=(a_n+1)\|x\|,& &\|Q_nx\|=a_n\|x\|
\end{align*} 
and we obtain that the complementary sequences of projectors $(P_n),(Q_n)$ are invariant for the system $(\mathcal{A})$.

From
\begin{align*}
h_m	\|A_m^nP_nx\|&=\frac{1+\ln a_{n}}{1+\ln a_{m}}h_n\|P_nx\|\leq(1+\ln a_{n}) h_n\|P_nx\|\\
&\text{and}\\
(1+\ln a_{m})k_n\|A_m^nQ_nx\|&= (1+\ln a_{n})k_m\|Q_nx\|\geq  {k_m}\|Q_{n}x\|,
\end{align*}
for all $(m,n)\in\Delta$, we obtain that the pair $(\mathcal{A},P)$ is $(h,k)-d.$ with $d_n=1+\ln a_{n}$ .

If we assume that the pair $(\mathcal{A},P)$ is $u.-(h,k)-d.$ then there exists a constant $N\geq 1$ such that
\begin{equation*}
1+\ln a_{m}\leq N(1+\ln a_{n}) \text{, for all } (m,n)\in\Delta.
\end{equation*}
Taking $n=0$ and $m\to\infty$ it results a contradiction.
\end{example}
A first result is the characterization of $(h,k)-d.$ with strongly invariant sequences of projectors.
\begin{theorem}\label{thm-dicho cu strong crestere}
Let $(P_n)$ be strongly invariant for $(\mathcal{A})$.
	The pair $(\mathcal{A},P)$ is $(h,k)-d.$ if and only if there exists a nondecreasing sequence $ s:\mathbb{N}\rightarrow [1,\infty)$ such that
	\begin{enumerate}
		\item[$(hd_2)$] $h_m||A_m^nP_nx||\leq s_n  h_n||x||$
		\item[$(kd_2)$] $ k_m||B_m^nQ_mx||\leq s_m  k_n||x||,$
	\end{enumerate}
	for all $ (m,n,x)\in\Delta\times X$, where $(Q_n)$ is the complementary sequence of $(P_n)$.	
\end{theorem}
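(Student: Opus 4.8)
The plan is to prove the two implications separately, in each case reducing one of the norm estimates $(hd_2)$, $(kd_2)$ to its counterpart $(hd_1)$, $(kd_1)$ by a judicious substitution combined with the algebraic identities of Remark \ref{rem1}; strong invariance is used only through the existence of $B$ and its relations (1)--(3).

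For the sufficiency part (assuming $(hd_2)$ and $(kd_2)$), I would take $d=s$ and exploit the idempotency of the projectors. To obtain $(hd_1)$, I would replace $x$ by $P_nx$ in $(hd_2)$: since $P_n^2=P_n$ the left-hand side is unchanged while the right-hand side becomes $s_nh_n\|P_nx\|$. To obtain $(kd_1)$, the natural move is to replace $x$ by $A_m^nQ_nx$ in $(kd_2)$. Using the invariance relation $A_m^nQ_n=Q_mA_m^n$ one checks that $A_m^nQ_nx\in\operatorname{Im}Q_m$, so that $Q_m(A_m^nQ_nx)=A_m^nQ_nx$, and then property (2) of Remark \ref{rem1} gives $B_m^nQ_m(A_m^nQ_nx)=B_m^nA_m^nQ_nx=Q_nx$. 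Feeding these identities into $(kd_2)$ yields exactly $(kd_1)$ with constant $s_m$.

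For the necessity part (assuming $(hd_1)$ and $(kd_1)$), I would derive $(hd_2)$ at once from $(hd_1)$ via the trivial bound $\|P_nx\|\le\|P_n\|\,\|x\|$. For $(kd_2)$, I would set $y=B_m^nQ_mx$; by property (3) of Remark \ref{rem1} this vector lies in $\operatorname{Im}Q_n$, i.e. $Q_ny=y$, so applying $(kd_1)$ to $y$ gives $k_m\|B_m^nQ_mx\|\le d_mk_n\|A_m^nB_m^nQ_mx\|$, whereupon property (1) collapses $A_m^nB_m^nQ_mx$ to $Q_mx$. The bound $\|Q_mx\|\le\|Q_m\|\,\|x\|$ then closes the estimate. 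Since $\|P_n\|$ and $\|Q_n\|$ are finite for each $n$, I can define $s_n=\max\{1,\ \max_{0\le j\le n}d_j\|P_j\|,\ \max_{0\le j\le n}d_j\|Q_j\|\}$, which is nondecreasing, valued in $[1,\infty)$, and dominates the constants arising in both estimates.

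The routine parts are the projector-norm bounds; the delicate point is the bookkeeping with the skew-evolution operator $B$, namely choosing the substitutions so that each composite $A_m^nB_m^nQ_m$ or $B_m^nA_m^nQ_n$ is collapsed by the identities (1)--(3), and verifying at every step that the argument lands in the correct kernel $\operatorname{Im}Q_n$ or $\operatorname{Im}Q_m$ so that those identities apply. I expect this kernel-tracking to be the main obstacle, since strong invariance---hence the existence of $B$---is precisely what makes the $k$-type estimates reversible.
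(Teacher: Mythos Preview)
Your proposal is correct and follows essentially the same route as the paper: in both directions the $h$-estimate is handled by the trivial substitution $x\mapsto P_nx$ together with the bound $\|P_nx\|\le\|P_n\|\,\|x\|$, while the $k$-estimate is reduced via the identities (1)--(3) of Remark~\ref{rem1}, substituting $x\mapsto A_m^nQ_nx$ for sufficiency and $y=B_m^nQ_mx$ for necessity, exactly as you describe. Your definition of $s_n$ via $\max_{0\le j\le n}$ is in fact cleaner than the paper's, which writes $s_n=\sup_{k\ge n}d_k(\|P_k\|+\|Q_k\|)$ (a nonincreasing sequence, presumably a typo for $k\le n$).
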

\begin{proof}
	\textit{Necessity:}We assume that the pair $(\mathcal{A},P)$ is $(h,k)-d.$ with $(P_n)$ strongly invariant for $(\mathcal{A})$. We have that:
	\begin{align*}
	h_m\|A_m^nP_nx\|\leq d_n h_n\|P_nx\|\leq d_n\|P_n\|h_n\|x\|\leq d_n(\|P_n\|+\|Q_n\|)h_n\|x\|\leq s_n h_n\|x\|.
	\end{align*} 
	 By Remark \ref{rem1} we obtain the implication $(kd_1)\Rightarrow (kd_2)$ .
	 \begin{align*}
	k_m\|B_m^nQ_mx\|&=k_m\|Q_nB_m^nQ_mx\| \leq d_mk_n\|A_m^nQ_nB_m^nQ_mx\|=d_m k_n\|Q_mx\|\\
	&\leq d_m\|Q_m\|k_n\|x\|\leq d_m(\|P_m\|+\|Q_m\|)k_n\|x\|\leq s_mk_n\|x\|,
	 \end{align*}
	 for all $(m,n,x)\in\Delta\times X$, where 
	 \begin{equation*}
	 s_n=\sup_{k\geq n}d_k(\|P_k\|+\|Q_k\|).
	 \end{equation*}
	 \textit{Sufficiency:}
	 The implication $(hd_2)\Rightarrow(hd_1)$ results by replacing $x$ with $P_nx$.
	 
	 The implication $(kd_2)\Rightarrow(kd_1)$ results by Remark \ref{rem1} for $d_n=s_n$.
	 \begin{align*}
	 k_m\|Q_nx\|=k_m\|B_m^nQ_mA_m^nQ_nx\|\leq s_m k_n\|A_m^nQ_nx\|
	 \end{align*}
	 \qed 
	\end{proof}
\begin{definition}\label{def-norma-compatibila}
	A sequence of norms $\mathcal{N}_1=\{\||\cdot\||_n,n\in\mathbb{N}\}$ is called \textit{compatible} with the sequence of projectors $(P_n) $ if there exists a nondecreasing sequence $c:\mathbb{N}\to[1,\infty)$ such that 
	\begin{equation}\label{norma-compatibila-cu-P}
	\|x\|\leq \||x\||_n\leq c_n(\|P_nx\|+\|Q_nx\|),
	\end{equation}
	for all $(n,x)\in\mathbb{N}\times X, $ where $(Q_n)$ is the complementary projectors sequence of $(P_n)$.
\end{definition}
\begin{remark}\label{rem-norma-compatibila-fara-proiectori}
	A sequence of norms $\mathcal{N}_1=\{\||\cdot\||_n,n\in\mathbb{N}\}$ is compatible with the sequence of projectors $(P_n) $ if	and only if there exists a nondecreasing sequence $c^1_n\geq 1$ such that
	\begin{equation}\label{norma-compatibila-tare}
	\|x\|\leq \||x\||_n\leq c^1_n\|x\|
	\end{equation}
	\end{remark}
\begin{proof}
	\textit{Necessity:} We only have to prove the right side of (\ref{norma-compatibila-cu-P}). By (\ref{norma-compatibila-tare}) there exists a nondecreasing sequence $c^1_n\geq 1$ such that
	\begin{align*}
	\||x\||\leq c^1_n\|x\|=c^1_n\|P_nx+Q_nx\|\leq c^1_n(\|P_nx\|+\|Q_nx\|),
	\end{align*}for all $(n,x)\in\mathbb{N}\times X$.
	
	\textit{Sufficiency:} From inequalities (\ref{norma-compatibila-cu-P}) there exists a nondecreasing sequence $c_n\geq 1$ such that
	\begin{align*}
	\|x\|\leq\||x\||_n\leq c_n(\|P_nx\|+\|Q_nx\|)\leq c_n(\|P_n\|+\|Q_n\|)\|x\|\leq c^1_n\|x\|,
	\end{align*}for all $(n,x)\in\mathbb{N}\times X$, where $c^1_n=\sup_{k\leq n}c_k(\|P_k\|+\|Q_k\|)$.
	\qed 
	\end{proof}
\begin{example}\label{ex-norma-compatibila-dichotomie}
	If the pair $(\mathcal{A},P)$ is $(h,k)-d.$ with $(P_n)$ strongly invariant for $(\mathcal{A})$, then the sequence of norms $\mathcal{N}_1=\{\||\cdot\||_n,n\in\mathbb{N}\}$ given by
	\begin{equation}\label{norma-dicho-dichotomie}
	\||x\||_n=\sup_{m\geq n}\frac{h_m}{h_n}\|A_m^n P_n x\|+\sup_{p\leq n}\frac{k_n}{k_p}\|B_n^p Q_n x\|,
	\end{equation}
	for all $(n,x)\in\mathbb{N}\times X$ is compatible with $(P_n)$.
	
	Indeed taking $p=m=n$ in (\ref{norma-dicho-dichotomie}) we have that
	\begin{align*}
	\||x\||_n\geq \|P_nx\|+\|Q_n x\|\geq \|P_nx+Q_nx\|=\|x\|
	\end{align*}
	From Theorem \ref{thm-dicho cu strong crestere} there exists a nondecreasing sequence $s:\mathbb{N}\to[1,\infty)$ such that the inequalities $(hd_2),(kd_2)$ hold. If we replace $x$ by $P_nx$ in $(hd_2)$, respectively by $Q_nx$ in $(kd_2)$  we obtain that
	\begin{align*}
	h_m\|A_m^nP_nx\|&\leq s_nh_n\|P_nx\|\\
	k_n\|B_n^pQ_nx\|&\leq s_n k_p\|Q_nx\|,
	\end{align*}
	for all $m\geq n\geq p\geq 0$ and thus we have
	\begin{align*}
	\||x\||_n\leq s_n\|P_nx\|+s_n\|Q_nx\|,
	\end{align*}
	for all $(m,x)\in\mathbb{N}\times X.$
	
	In conclusion the Definition \ref{def-norma-compatibila} is satisfied for $c_n=s_n.$
\end{example}
Further we will give a characterization of the concept of $(h,k)-d.$ using  sequences of norms compatible with $(P_n)$, which is an equivalence between the $(h,k)-d.$ and a certain type of $u.-(h,k)-d.$ In general the uniform and nonuniform concepts are not the same (see Example \ref{ex-unif-care-nu-este-dicho}).
\begin{theorem}\label{teorem-unif=neunif}
	Let $(P_n)$ be strongly invariant for the discrete system $(\mathcal{A})$.
	The pair $(\mathcal{A},P)$ is $(h,k)-d.$ if and only if there exists a sequence of norms
	
	 $\mathcal{N}_1=\{\||\cdot\||_n,n\in\mathbb{N}\}$ compatible with $(P_n)$  such that
	\begin{enumerate}
		\item[$(hd_3)$]  $h_m\||A_m^nP_nx\||_m\leq   h_n\||P_nx\||_n$
		\item[$(kd_3)$] $ k_m\||B_m^nQ_mx\||_n\leq k_n\||Q_mx\||_m,$
	\end{enumerate}
	for all $ (m,n,x)\in\Delta\times X$, where $(Q_n)$ is the complementary sequence of $(P_n)$.
\end{theorem}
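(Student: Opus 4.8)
The plan is to prove the two implications of the equivalence separately, letting the explicit norm of Example~\ref{ex-norma-compatibila-dichotomie} do all the work in the forward direction.

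\emph{Necessity.} Assuming $(\mathcal{A},P)$ is $(h,k)-d.$, I would take precisely the sequence of norms
\[
\||x\||_n=\sup_{m\geq n}\frac{h_m}{h_n}\|A_m^n P_n x\|+\sup_{p\leq n}\frac{k_n}{k_p}\|B_n^p Q_n x\|
\]
from Example~\ref{ex-norma-compatibila-dichotomie}; its compatibility with $(P_n)$, as well as the finiteness of the two suprema (guaranteed by $(hd_2),(kd_2)$ of Theorem~\ref{thm-dicho cu strong crestere}), is already recorded there, so only $(hd_3)$ and $(kd_3)$ remain. The verification rests on three elementary facts: the invariance relations $A_m^nP_n=P_mA_m^n$ and $A_m^nQ_n=Q_mA_m^n$, which give $P_m(A_m^nP_nx)=A_m^nP_nx$ and $Q_m(A_m^nP_nx)=0$; the cocycle identity $A_l^mA_m^n=A_l^n$; and, for the kernel part, the skew-evolution cocycle identity $B_n^pB_m^nQ_m=B_m^pQ_m$ of Remark~\ref{rem1}(5). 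Feeding $y=A_m^nP_nx$ into the definition of $\||\cdot\||_m$, the kernel term vanishes and the range term collapses by the cocycle identity, so that $h_m\||A_m^nP_nx\||_m=\sup_{l\geq m}h_l\|A_l^nP_nx\|$, whereas $h_n\||P_nx\||_n=\sup_{l\geq n}h_l\|A_l^nP_nx\|$; since $m\geq n$ gives $\{l\geq m\}\subseteq\{l\geq n\}$, $(hd_3)$ follows. The argument for $(kd_3)$ is the mirror image: for $y=B_m^nQ_mx$ one has $P_ny=0$ and $Q_ny=y$, the range term drops out, and the skew-cocycle identity turns $k_m\||B_m^nQ_mx\||_n$ into $\sup_{p\leq n}(k_nk_m/k_p)\|B_m^pQ_mx\|$, which is dominated by $k_n\||Q_mx\||_m=\sup_{p\leq m}(k_nk_m/k_p)\|B_m^pQ_mx\|$ because $\{p\leq n\}\subseteq\{p\leq m\}$.

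\emph{Sufficiency.} Here I would combine the compatibility inequalities $\|z\|\leq\||z\||_n\leq c_n(\|P_nz\|+\|Q_nz\|)$ with $(hd_3),(kd_3)$. For the range part, $(hd_3)$ and the left-hand inequality give $h_m\|A_m^nP_nx\|\leq h_m\||A_m^nP_nx\||_m\leq h_n\||P_nx\||_n$, and since $\||P_nx\||_n\leq c_n(\|P_nx\|+\|Q_nP_nx\|)=c_n\|P_nx\|$ (using $Q_nP_n=0$), this is exactly $(hd_1)$ with $d_n=c_n$. The delicate point is $(kd_1)$, which is phrased through the forward operator $A_m^n$ while $(kd_3)$ controls only the skew-evolution operator $B_m^n$. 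The same mechanism first yields, for every $w$, the estimate $k_m\|B_m^nQ_mw\|\leq k_nc_m\|Q_mw\|$. To convert this into a bound on $A_m^n$ I would invoke strong invariance: by Remark~\ref{rem1}(2), $Q_nx=B_m^nA_m^nQ_nx$, and setting $z=A_m^nQ_nx$ one checks $Q_mz=z$ from $A_m^nQ_n=Q_mA_m^n$, so that $Q_nx=B_m^nQ_mz$. Applying the previous estimate to $w=z$ gives $k_m\|Q_nx\|\leq k_nc_m\|Q_mz\|=k_nc_m\|A_m^nQ_nx\|$, which is $(kd_1)$ with $d_m=c_m$, and hence $(\mathcal{A},P)$ is $(h,k)-d.$

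I expect the conversion of the kernel estimate from $B_m^n$ to $A_m^n$ in the sufficiency part to be the only real obstacle, since it is the single place where strong invariance --- rather than mere invariance --- of $(P_n)$ is genuinely used; everything else reduces to bookkeeping with monotone suprema. One could instead route the sufficiency through Theorem~\ref{thm-dicho cu strong crestere}, extracting $(hd_2),(kd_2)$ first, but the direct derivation of $(hd_1),(kd_1)$ above appears shorter.
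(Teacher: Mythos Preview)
Your proof is correct and follows essentially the same route as the paper: the necessity uses the very same Lyapunov norm from Example~\ref{ex-norma-compatibila-dichotomie} together with the cocycle and skew-cocycle identities and the inclusion of index ranges, exactly as in the paper. For sufficiency the paper chooses to verify $(hd_2),(kd_2)$ and then invoke Theorem~\ref{thm-dicho cu strong crestere}, whereas you go straight to $(hd_1),(kd_1)$ and in doing so inline the relevant step of that theorem (the use of Remark~\ref{rem1}(2) to pass from the $B_m^n$-estimate to the $A_m^n$-estimate); the two arguments are equivalent, and you yourself note this alternative at the end.
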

\begin{proof}
	\textit{Necessity:}
	
	Let $(\mathcal{A},P)$ be a pair which is $(h,k)-d.$ with $(P_n)$ strongly invariant for $(\mathcal{A})$. By Example \ref{ex-norma-compatibila-dichotomie} there exists a sequence of norms $\mathcal{N}_1=\{\||\cdot\||_n,n\in\mathbb{N}\}$, given by (\ref{norma-dicho-dichotomie}), compatible with $(P_n)$. We only have to prove the two inequalities. By the fact that $(P_n),(Q_n)$ are orthogonal, replacing $x$ by $P_nx$ or $Q_nx$ in (\ref{norma-dicho-dichotomie}) and since $[m,\infty)\subseteq[n,\infty), [0,n]\subseteq[0,m]$ for all $(m,n)\in \Delta$ we obtain that:
	\begin{align*}
	\||A_m^n P_nx\||_m&=\||P_mA_m^nP_nx\||_m=\sup_{j\geq m}\frac{h_j}{h_m}\|A_j^mP_mA_m^nP_nx\|=\sup_{j\geq m}\frac{h_j}{h_m}\|A_j^nP_nx\|\\
	&\leq \frac{h_n}{h_m}\sup_{j\geq n}\frac{h_j}{h_n}\|A_j^nP_nx\|=\frac{h_n}{h_m}\||P_nx\||_n
	\end{align*}
	and
	\begin{align*}
	\||B_m^n Q_mx\||_n&=\||Q_nB_m^nQ_mx\||_n=\sup_{p\leq n}\frac{k_n}{k_p}\|B_n^pQ_nB_m^nQ_mx\|=\sup_{p\leq n}\frac{k_n}{k_p}\|B_m^nQ_mx\|\\
	&\leq \frac{k_n}{k_m}\sup_{p\leq m}\frac{k_m}{k_p}\|B_m^nQ_mx\|=\frac{k_n}{k_m}\||Q_mx\||_m,
	\end{align*}
	for all $(m,n,x)\in \Delta\times X.$
	
	\textit{Sufficiency:}
	
	We assume that there exists a sequence of norms $\mathcal{N}_1=\{\||\cdot\||_n,n\in\mathbb{N}\}$ compatible with $(P_n)$  such that the inequalities $(hd_3),(kd_3)$ hold. According to the Theorem \ref{thm-dicho cu strong crestere} we only have to prove that the inequalities $(hd_2),(kd_2)$ hold. By Definition \ref{def-norma-compatibila} we have that there exists a nondecreasing sequence $c_n\geq 1$ such that (\ref{norma-compatibila-cu-P}) are satisfied. Thus we obtain that:
	\begin{align*}
	h_m\|A_m^nP_nx\|&\leq h_m\||A_m^nP_nx\||_m\leq h_n\||P_nx\||_n\leq c_nh_n \|P_n x\|\\
	&\leq c_n \|P_n\|h_n\|x\|\leq c_n(\|P_n\|+\|Q_n\|)h_n\|x\|\leq s_nh_n\|x\|\\
	\text{and}\\
	k_m\|B_m^nQ_mx\|&\leq k_m\||B_m^nQ_mx\||_n\leq k_n\||Q_mx\||_m\leq c_m k_m\|Q_mx\|\\
	&\leq c_m \|Q_m\|k_n\|x\|\leq c_m(\|P_m\|+\|Q_m\|)k_n\|x\|\leq s_mk_n\|x\|
	\end{align*}
	for all $(m,n,x)\in\Delta\times X$, where $s_n=\sup_{k\leq n}c_k(\|P_k\|+\|Q_k\|).$
	\qed.
\end{proof}
\begin{theorem}\label{thm-unif-neunif-fara-proiectori-in-dreapta}
		Let $(P_n)$ be strongly invariant for the discrete system $(\mathcal{A})$.
	The pair $(\mathcal{A},P)$ is $(h,k)-d.$ if and only if there exists a sequence of norms
	
	 $\mathcal{N}_1=\{\||\cdot\||_n,n\in\mathbb{N}\}$ compatible with $(P_n)$  such that
	\begin{enumerate}
		\item[$(hd_4)$]  $h_m\||A_m^nP_nx\||_m\leq   h_n\||x\||_n$
		\item[$(kd_4)$] $ k_m\||B_m^nQ_mx\||_n\leq k_n\||x\||_m,$
	\end{enumerate}
	for all $ (m,n,x)\in\Delta\times X$, where $(Q_n)$ is the complementary projectors sequence of $(P_n)$.
\end{theorem}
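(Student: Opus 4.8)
The plan is to deduce this statement from Theorem \ref{teorem-unif=neunif} rather than to argue from scratch, since $(hd_4),(kd_4)$ differ from $(hd_3),(kd_3)$ only in that the right-hand members carry the full norm $\||x\||$ in place of $\||P_nx\||$ and $\||Q_mx\||$. The whole proof therefore rests on one elementary observation about the specific compatible norm (\ref{norma-dicho-dichotomie}): because $P_n$ and $Q_n$ are complementary projectors, $P_nQ_n=Q_nP_n=0$, so substituting $P_nx$ and $Q_nx$ into (\ref{norma-dicho-dichotomie}) kills one of the two suprema in each case and yields the additive splitting
\begin{equation*}
\||x\||_n=\||P_nx\||_n+\||Q_nx\||_n,
\end{equation*}
valid for all $(n,x)\in\mathbb{N}\times X$. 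In particular $\||P_nx\||_n\leq\||x\||_n$ and $\||Q_nx\||_n\leq\||x\||_n$.

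For \emph{necessity} I would take exactly the sequence of norms furnished by Theorem \ref{teorem-unif=neunif}, i.e.\ the one defined in Example \ref{ex-norma-compatibila-dichotomie} by (\ref{norma-dicho-dichotomie}). It is already compatible with $(P_n)$ and already satisfies $(hd_3)$ and $(kd_3)$. Feeding the splitting above into the right-hand sides of those two inequalities replaces $\||P_nx\||_n$ by something dominated by $\||x\||_n$ and $\||Q_mx\||_m$ by something dominated by $\||x\||_m$, so $(hd_4)$ and $(kd_4)$ follow at once, with no further estimate required.

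For \emph{sufficiency} I would not use the splitting at all, only compatibility; the convenient form is the two-sided estimate of Remark \ref{rem-norma-compatibila-fara-proiectori}, namely $\|y\|\leq\||y\||_n\leq c^1_n\|y\|$ with $(c^1_n)$ nondecreasing. Assuming $(hd_4),(kd_4)$, I would chain the left inequality on the outputs with the right inequality on the inputs:
\begin{align*}
h_m\|A_m^nP_nx\|&\leq h_m\||A_m^nP_nx\||_m\leq h_n\||x\||_n\leq c^1_nh_n\|x\|,\\
k_m\|B_m^nQ_mx\|&\leq k_m\||B_m^nQ_mx\||_n\leq k_n\||x\||_m\leq c^1_mk_n\|x\|.
\end{align*}
Setting $s_n=c^1_n$ gives precisely $(hd_2)$ and $(kd_2)$, and Theorem \ref{thm-dicho cu strong crestere} then produces the $(h,k)$-dichotomy of $(\mathcal{A},P)$, closing the equivalence.

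I do not anticipate a genuine obstacle. The only step needing care is the additive splitting of (\ref{norma-dicho-dichotomie}), which relies on $A_m^nP_nQ_n=0$ and $B_n^pQ_nP_n=0$; both are immediate from $P_n+Q_n=I$ and $P_n^2=P_n$. Everything else is a routine chaining of the compatibility and growth inequalities already at hand.
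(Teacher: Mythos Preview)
Your proposal is correct and, for the necessity direction, essentially identical to the paper: both invoke Theorem~\ref{teorem-unif=neunif} with the norm (\ref{norma-dicho-dichotomie}) and use that $\||P_nx\||_n\leq\||x\||_n$ and $\||Q_mx\||_m\leq\||x\||_m$ (the paper writes these inequalities out explicitly rather than stating the full additive splitting, but it is the same observation).

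For sufficiency the routes diverge slightly. The paper simply substitutes $x\mapsto P_nx$ in $(hd_4)$ and $x\mapsto Q_mx$ in $(kd_4)$, recovering $(hd_3),(kd_3)$ verbatim, and then quotes the sufficiency half of Theorem~\ref{teorem-unif=neunif}. You instead unwind the compatible norm via Remark~\ref{rem-norma-compatibila-fara-proiectori} to land on $(hd_2),(kd_2)$ and close with Theorem~\ref{thm-dicho cu strong crestere}. Both are valid; the paper's substitution trick is shorter (one line), while yours avoids re-entering the proof machinery of Theorem~\ref{teorem-unif=neunif} and shows more directly how the compatibility constants become the dichotomy constants.
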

\begin{proof}\textit{Necessity:}
	It results by Theorem \ref{teorem-unif=neunif} and by the fact that
	\begin{equation*}
	\||P_nx\||_n=\sup_{m\geq n}\frac{h_m}{h_n}\|A_m^nP_nx\|\leq \sup_{m\geq n}\frac{h_m}{h_n}\|A_m^nP_nx\|+\sup_{p\leq n}\frac{k_n}{k_p}\|B^n_pQ_nx\|=\||x\||_n
	\end{equation*}
	and also
	\begin{equation*}
\||Q_nx\||_n=\sup_{p\leq n}\frac{k_n}{k_p}\|B^n_pQ_nx\|\leq \sup_{m\geq n}\frac{h_m}{h_n}\|A_m^nP_nx\|+\sup_{p\leq n}\frac{k_n}{k_p}\|B^n_pQ_nx\|=\||x\||_n,
\end{equation*}	for all $(n,x)\in\mathbb{N}\times X.$

\textit{Sufficiency:} Results replacing $x$ by $P_nx$, respectively by $Q_mx$ in $(hd_4),(kd_4)$.
\qed 
	\end{proof}
In the particular cases when the growth rates are exponential or polynomial we obtain the characterizations of $e.d$ and $p.d$ in terms of $u.e.d$ and $u.p.d$ with Lyapunov type sequences of norms.
\begin{corollary}
	Let $(P_n)$ be strongly invariant for the discrete system $(\mathcal{A})$.
	The pair $(\mathcal{A},P)$ is $e.d.$ if and only if there exist a sequence of norms $\mathcal{N}_1=\{\||\cdot\||_n,n\in\mathbb{N}\}$ compatible with $(P_n)$ and the real constants $\alpha,\beta>0$ such that
	\begin{enumerate}
		\item[$(ed_1)$] $\||A_m^nP_nx\||_m\leq  e^{-\alpha(m-n)}\||P_nx\||_n$
		\item[$(ed_2)$] $ \||B_m^nQ_mx\||_n\leq  e^{-\beta(m-n)}\||Q_mx\||_m,$
	\end{enumerate}
	for all $ (m,n,x)\in\Delta\times X$, where $(Q_n)$ is the complementary sequence of $(P_n)$.	
\end{corollary}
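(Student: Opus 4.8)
The plan is to obtain this corollary as a direct specialization of Theorem~\ref{teorem-unif=neunif}, with no new analytic content. Recall from the remark following Definition~\ref{def1} that the pair $(\mathcal{A},P)$ is $e.d.$ exactly when it is $(h,k)-d.$ for the particular growth rates $h_m=e^{\alpha m}$ and $k_m=e^{\beta m}$ with some $\alpha,\beta>0$. Both of these are legitimate growth rates in the sense of the earlier definition: each is increasing, takes values in $[1,\infty)$ since $h_0=k_0=1$, and tends to infinity because $\alpha,\beta>0$. Hence Theorem~\ref{teorem-unif=neunif} applies verbatim to them, and the whole argument reduces to moving the exponential factors across the inequalities $(hd_3)$ and $(kd_3)$.

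For \emph{necessity} I would assume $(\mathcal{A},P)$ is $e.d.$, fix the corresponding $\alpha,\beta>0$, and apply Theorem~\ref{teorem-unif=neunif} to get a sequence of norms $\mathcal{N}_1$ compatible with $(P_n)$ for which $(hd_3)$ and $(kd_3)$ hold. Substituting $h_m=e^{\alpha m}$ and $h_n=e^{\alpha n}$ into $(hd_3)$ and dividing both sides by $e^{\alpha m}$ gives $\||A_m^nP_nx\||_m\leq e^{-\alpha(m-n)}\||P_nx\||_n$, which is $(ed_1)$. In the same way, substituting $k_m=e^{\beta m}$ and $k_n=e^{\beta n}$ into $(kd_3)$ and dividing by $e^{\beta m}$ yields $\||B_m^nQ_mx\||_n\leq e^{-\beta(m-n)}\||Q_mx\||_m$, which is $(ed_2)$.

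For \emph{sufficiency} I would run the computation in reverse. Given a compatible sequence of norms together with constants $\alpha,\beta>0$ satisfying $(ed_1)$ and $(ed_2)$, I multiply $(ed_1)$ by $e^{\alpha m}$ to recover $(hd_3)$ with $h_m=e^{\alpha m}$, and multiply $(ed_2)$ by $e^{\beta m}$ to recover $(kd_3)$ with $k_m=e^{\beta m}$. Theorem~\ref{teorem-unif=neunif} then guarantees that $(\mathcal{A},P)$ is $(h,k)-d.$ for these exponential rates, which is precisely the statement that the pair is $e.d.$

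Since the reasoning is only an algebraic rescaling of an equivalence already proved, I do not anticipate any real obstacle. The single point that deserves explicit mention is the verification that $h_m=e^{\alpha m}$ and $k_m=e^{\beta m}$ meet the defining conditions of a growth rate, so that Theorem~\ref{teorem-unif=neunif} may be invoked without modification; this is immediate from $\alpha,\beta>0$.
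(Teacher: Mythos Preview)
Your proposal is correct and matches the paper's approach: the corollary is stated immediately after Theorem~\ref{teorem-unif=neunif} as the specialization to exponential growth rates, with no separate proof given. Your explicit verification that $h_m=e^{\alpha m}$ and $k_m=e^{\beta m}$ are growth rates and your algebraic rewriting of $(hd_3)$, $(kd_3)$ into $(ed_1)$, $(ed_2)$ are exactly what the paper leaves implicit.
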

\begin{corollary}
	Let $(P_n)$ be strongly invariant for the discrete system $(\mathcal{A})$.
	The pair $(\mathcal{A},P)$ is $p.d.$ if and only if there exist a sequence of norms $\mathcal{N}_1=\{\||\cdot\||_n,n\in\mathbb{N}\}$ compatible with $(P_n)$ and the real constants $\alpha,\beta>0$ such that
	\begin{enumerate}
		\item[$(pd_1)$] $(m+1)^\alpha\||A_m^nP_nx\||_m\leq   (n+1)^\alpha\||P_nx\||_n$
		\item[$(pd_2)$] $ (m+1)^\beta\||B_m^nQ_mx\||_n\leq  (n+1)^\beta\||Q_mx\||_m,$
	\end{enumerate}
	for all $ (m,n,x)\in\Delta\times X$, where $(Q_n)$ is the complementary sequence of $(P_n)$.
\end{corollary}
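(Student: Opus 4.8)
The plan is to obtain this corollary as a direct specialization of Theorem~\ref{teorem-unif=neunif} to polynomial growth rates. First I would invoke the definition of polynomial dichotomy recorded in the remark following Definition~\ref{def1}: the pair $(\mathcal{A},P)$ is $p.d.$ precisely when it is $(h,k)-d.$ for the particular choice $h_m=(m+1)^\alpha$ and $k_m=(m+1)^\beta$ with $\alpha,\beta>0$. Hence the only preliminary point to settle is that these two sequences are admissible growth rates in the sense of the paper, so that Theorem~\ref{teorem-unif=neunif} may be applied to them.

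This verification is immediate. Since $\alpha,\beta>0$ and $m+1\geq 1$ for all $m\in\mathbb{N}$, both $(m+1)^\alpha$ and $(m+1)^\beta$ take values in $[1,\infty)$, are strictly increasing in $m$, and tend to $\infty$ as $m\to\infty$. Thus $h$ and $k$ satisfy the requirements of the definition of a growth rate, and the strong-invariance hypothesis on $(P_n)$ matches exactly the hypothesis of Theorem~\ref{teorem-unif=neunif}, so that theorem applies verbatim.

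With this identification in hand I would simply substitute $h_m=(m+1)^\alpha$, $h_n=(n+1)^\alpha$ into $(hd_3)$ and $k_m=(m+1)^\beta$, $k_n=(n+1)^\beta$ into $(kd_3)$. The inequality $(hd_3)$ becomes
\[
(m+1)^\alpha\||A_m^nP_nx\||_m\leq (n+1)^\alpha\||P_nx\||_n,
\]
which is $(pd_1)$, and $(kd_3)$ becomes
\[
(m+1)^\beta\||B_m^nQ_mx\||_n\leq (n+1)^\beta\||Q_mx\||_m,
\]
which is $(pd_2)$. Conversely, a compatible sequence of norms satisfying $(pd_1)$ and $(pd_2)$ satisfies $(hd_3)$ and $(kd_3)$ for these same growth rates, so the equivalence furnished by Theorem~\ref{teorem-unif=neunif} delivers both implications of the corollary simultaneously.

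Since the substantive analytic content---the construction of the compatible Lyapunov sequence of norms in Example~\ref{ex-norma-compatibila-dichotomie} together with the two-sided estimates in the proof of Theorem~\ref{teorem-unif=neunif}---has already been carried out in full generality, I do not expect any genuine obstacle. The only point requiring care is the bookkeeping of which exponent, $\alpha$ or $\beta$, is attached to which index, so that the factors $(m+1)$ and $(n+1)$ carry the correct powers in $(pd_1)$ and $(pd_2)$; this is a purely notational matter once the growth-rate check above has been made.
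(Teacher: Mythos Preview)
Your proposal is correct and matches the paper's approach exactly: the paper states this corollary immediately after Theorem~\ref{teorem-unif=neunif} without a separate proof, treating it as the direct specialization of that theorem to the polynomial growth rates $h_m=(m+1)^\alpha$ and $k_m=(m+1)^\beta$.
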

\begin{definition}\label{def-crestere}
We say the pair $(\mathcal{A},P)$ has \textit{$(h,k)$- growth} (and we denote by $(h,k)-g.$) if there exists a nondecreasing sequence $g:\mathbb{N}\to[1,\infty)$  such that
\begin{enumerate}
	\item[$(hg_1)$] $h_n || A_m^nP_nx|| \leq g_n h_m  ||P_nx||$
	\item[$(kg_1)$] $ k_n || Q_nx|| \leq g_m  k_m ||A_m^nQ_nx||$,
\end{enumerate}
for all $ (m,n,x)\in\Delta\times X$, where $(Q_n)$ is the complementary sequence of $(P_n).$
\end{definition}
\begin{remark}
As particular cases of $(h,k)$-growth we have:
\begin{enumerate}
	\item If the sequence $(d_n)$ is constant then we obtain the uniform-$(h,k)$-growth property, denoted by $u.-(h,k)-g.$
	\item For $h_m=e^{\alpha m}, k_m=e^{\beta m} $ where $\alpha,\beta>0$ it  results the  exponential growth concept, denoted by $e.g.$
	\item If $h_m=(m+1)^\alpha, k_m=(m+1)^\beta,$ where $\alpha,\beta>0$ then we obtain the polynomial growth denoted by $p.g.$
\end{enumerate}	
\end{remark}

\begin{remark}
	If the pair $(\mathcal{A},P)$ is $(h,k)-d.$ then it has $(h,k)-g.$. The following example will emphases that the reverse is not always true.
\end{remark}
Let \begin{equation*}
\mathcal{H}=\{h,a:\mathbb{N}\to[1,\infty)|\lim\limits_{n\to\infty}\frac{h_n^2}{\ln a_n +1}=\infty \}.
\end{equation*}
\begin{example}\label{ex-are-crestere-nu-este-dicho}
	We consider the Banach space $X=\mathbb{R}^2$ endowed with the norm $\|(x_1,x_2)\|=\max\{|x_1|,|x_2|\}$	and let the projectors sequences $P,Q:\mathbb{N}\to\mathcal{B}(X)$ given by Example \ref{ex-unif-care-nu-este-dicho}.
	We consider the dynamical system $(\mathcal{A})$ generated by the sequence  $A_n$ given by
	\begin{equation*}
	A_n= \frac{1+\ln a_{n}}{1+\ln a_{n+1}}\left( \frac{h_n+1}{h_{n}}P_n+\frac{k_{n}}{k_n+1}Q_{n+1}\right) ,
	\end{equation*}
	with $h,a\in\mathcal{H}$ and $k:\mathbb{N}\to[1,\infty)$ a growth rate. 
	
	It follows that the evolution operator associated to the system  $(\mathcal{A})$ is given by
	\begin{eqnarray*}
		A_m^n= \frac{1+\ln a_n}{1+\ln a_m } \left( \frac{h_m}{h_{n}}P_n+\frac{k_{n}}{k_m}Q_{m}\right) ,&\text{for all }&(m,n)\in\Delta.
	\end{eqnarray*}
	We observe that
	\begin{eqnarray*}
		\|A_m^nP_nx\|=\frac{1+\ln a_n}{1+\ln a_m} \cdot\frac{h_m}{h_{n}}\|P_nx\|&\text{ and }& \|A_m^nQ_nx\|=\frac{1+\ln a_n}{1+\ln a_m }\cdot\frac{k_{n}}{k_m}\|Q_{n}x\|.
	\end{eqnarray*}
	It follows that Definition \ref{def-crestere} is satisfied for $g_n=1+\ln a_n.$
	
	If we assume that the pair $(\mathcal{A},P)$ is $(h,k)-d.$ then there exists a nondecreasing sequence $d_n\geq 1$ such that:
	\begin{align*}
	\frac{h_m^2}{1+\ln a_m }\leq d_n\frac{h_{n}^2}{1+\ln a_n}, \text{ for all } (m,n)\in\Delta.
	\end{align*}
	Taking $n=0$ and $m\to\infty$, since $h,a\in\mathcal{H}$ we have
	\begin{align*}
	\infty\leq d_0\frac{h_{0}^2}{1+\ln a_0},
	\end{align*}	which is absurd.
	
\end{example}
\begin{remark}
	If the pair $(\mathcal{A},P)$ has $u.-(k,h)-g.$ then it has $(h,k)-g.$. The reverse of this statement is not always valid.
	\end{remark}
\begin{example}
	We consider the pair $(\mathcal{A},P)$ given by the Example \ref{ex-are-crestere-nu-este-dicho} and we have that it has $(h,k)-g.$
	
	If we assume that the pair has $u.-(h,k)-g.$ then there exists a constant $M\geq 1$ such that
	\begin{eqnarray*}
	1+\ln a_{m}\leq M(1+\ln a_n), \text{for all } (m,n)\in\Delta.
	\end{eqnarray*}
Taking $n=0$ and $m\to\infty$ we obtain a contradiction.
	\end{example}

The following  theorem gives a characterization of the $(h,k)$-growth property with strongly invariant projectors. 
\begin{theorem}\label{thm-crestere-cu-tare-inv}
Let $(P_n)$ be strongly invariant for $(\mathcal{A})$.
The pair $(\mathcal{A},P)$ has $(h,k)$- growth if there exists a nondecreasing sequence $ g:\mathbb{N}\rightarrow [1,\infty)$ such that
\begin{enumerate}
	\item[$(hg_2)$] $h_n||A_m^nP_nx||\leq g_n  h_m||x||$
	\item[$(kg_2)$] $ k_n||B_m^nQ_mx||\leq g_m  k_m||x||,$
\end{enumerate}
for all $ (m,n,x)\in\Delta\times X$, where $(Q_n)$ is the complementary projectors sequence of $(P_n)$.
\end{theorem}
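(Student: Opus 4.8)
The plan is to treat this as the growth-rate counterpart of Theorem~\ref{thm-dicho cu strong crestere} and to prove it by the same mechanism, namely by passing between the projected estimates $(hg_1),(kg_1)$ and the ``norm-free'' estimates $(hg_2),(kg_2)$ with the help of the skew-evolution operator $B$ and the identities collected in Remark~\ref{rem1}. Since the preceding text announces a \emph{characterization}, I read the statement as an equivalence and would establish both implications; the two halves are formally symmetric to the corresponding steps for dichotomy, with the factors $h_n,h_m$ and $k_n,k_m$ now arranged in the growth order rather than the decay order.

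For the sufficiency $(hg_2),(kg_2)\Rightarrow(hg_1),(kg_1)$ the first implication is immediate: replacing $x$ by $P_nx$ in $(hg_2)$ and using $P_n^2=P_n$ gives $A_m^nP_nP_nx=A_m^nP_nx$, so $h_n\|A_m^nP_nx\|\le g_n h_m\|P_nx\|$, which is $(hg_1)$. For $(kg_2)\Rightarrow(kg_1)$ the key algebraic fact is the identity
\[
Q_nx=B_m^nQ_mA_m^nQ_nx ,
\]
which follows from property~(2) of Remark~\ref{rem1} together with $Q_mA_m^nQ_n=A_m^nQ_n$ (a consequence of the invariance $A_m^nQ_n=Q_mA_m^n$ and $Q_m^2=Q_m$). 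Substituting $A_m^nQ_nx$ for $x$ in $(kg_2)$ then yields $k_n\|Q_nx\|=k_n\|B_m^nQ_m(A_m^nQ_nx)\|\le g_m k_m\|A_m^nQ_nx\|$, which is exactly $(kg_1)$.

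For the necessity $(hg_1),(kg_1)\Rightarrow(hg_2),(kg_2)$ I would absorb the projection into the operator norm: from $(hg_1)$,
\[
h_n\|A_m^nP_nx\|\le g_n h_m\|P_nx\|\le g_n(\|P_n\|+\|Q_n\|)h_m\|x\|\le s_n h_m\|x\| ,
\]
with the nondecreasing majorant $s_n=\sup_{k\le n}g_k(\|P_k\|+\|Q_k\|)$. For the $k$-estimate I would rewrite $B_m^nQ_m=Q_nB_m^nQ_m$ (property~(3)) and note $A_m^nQ_nB_m^nQ_m=Q_m$ (properties~(1) and~(3)); applying $(kg_1)$ to $B_m^nQ_mx$ then gives
\[
k_n\|B_m^nQ_mx\|=k_n\|Q_nB_m^nQ_mx\|\le g_m k_m\|A_m^nQ_nB_m^nQ_mx\|=g_m k_m\|Q_mx\|\le s_m k_m\|x\| ,
\]
which is $(kg_2)$. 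The norm bounds are routine; the only genuinely delicate point, and the main obstacle, is the correct bookkeeping with $B_m^n$ on $\operatorname{Ker}P$: one must invoke precisely the right identity from Remark~\ref{rem1} at each cancellation, and must take the majorant as $s_n=\sup_{k\le n}(\cdot)$ rather than $\sup_{k\ge n}(\cdot)$ so that $s$ remains in $[1,\infty)$ and nondecreasing, as required by Definition~\ref{def-crestere}.
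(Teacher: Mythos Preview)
Your proposal is correct and follows exactly the route the paper indicates: the paper's own proof consists solely of the line ``Similar to the proof of Theorem~\ref{thm-dicho cu strong crestere}'', and your argument is precisely that analogue, using Remark~\ref{rem1} to pass between the projected estimates $(hg_1),(kg_1)$ and the norm-free estimates $(hg_2),(kg_2)$. Your care in taking $s_n=\sup_{k\le n}g_k(\|P_k\|+\|Q_k\|)$ so that the majorant is genuinely nondecreasing is well placed (the paper's corresponding choice $\sup_{k\ge n}$ in the dichotomy proof actually yields a nonincreasing sequence, so your version is the technically cleaner one).
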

\begin{proof}
Similar to the proof of Theorem \ref{thm-dicho cu strong crestere}.	
\qed 
\end{proof}
We will give another example of a sequence of norms compatible with $(P_n)$.
\begin{example}\label{ex-norma-compatibila-crestere}
If the pair $(\mathcal{A},P)$ has $(h,k)-g.$ then the sequence of norms

 $\mathcal{N}=\{\|\cdot\|_n,n\in\mathbb{N}\}$ given by
\begin{equation}\label{norma-dicho-crestere}
\|x\|_n=\sup_{m\geq n}\frac{h_n}{h_m}\|A_m^n P_n x\|+\sup_{p\leq n}\frac{k_p}{k_n}\|B_n^p Q_n x\|,
\end{equation}
for all $(n,x)\in\mathbb{N}\times X$ is compatible with $(P_n)$.

Indeed taking $p=m=n$ in (\ref{norma-dicho-crestere}) we have that
\begin{align*}
\|x\|_n\geq \|P_nx\|+\|Q_n x\|\geq \|P_nx+Q_nx\|=\|x\|
\end{align*}
From Theorem \ref{thm-crestere-cu-tare-inv} there exists a nondecreasing sequence $g:\mathbb{N}\to[1,\infty)$ such that the inequalities $(hg_2),(kg_2)$ hold. Thus we obtain that:
\begin{align*}
\|x\|_m\leq g_m\|x\|+g_m\|x\|,
\end{align*}
for all $(m,x)\in\mathbb{N}\times X.$

We obtain from Remark \ref{rem-norma-compatibila-fara-proiectori} that the sequence of norms $\mathcal{N}$ is compatible with $(P_n).$
\end{example}
Further we will give another characterization of the concept of $(h,k)$- dichotomy with sequences of norms compatible with $(P_n)$.
\begin{theorem}\label{torema-2}
Let $(P_n)$ be strongly invariant for the discrete system $(\mathcal{A})$.
The pair $(\mathcal{A},P)$ is $(h,k)-d.$ if and only if there exist a sequence of norms $\mathcal{N}=\{\|\cdot\|_n,n\in\mathbb{N}\}$ compatible with $(P_n)$ and a nondecreasing sequence $s:\mathbb{N}\to[1,\infty)$ such that
\begin{enumerate}
	\item[$(hd_5)$] $h_m\|A_m^nP_nx\|_m\leq s_n  h_n\|x\|_n$
	\item[$(kd_5)$] $ k_m\|B_m^nQ_mx\|_n\leq s_m k_n\|x\|_m,$
\end{enumerate}
for all $ (m,n,x)\in\Delta\times X$, where $(Q_n)$ is the complementary sequence of $(P_n)$.
\end{theorem}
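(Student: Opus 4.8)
The plan is to prove the equivalence by choosing, in the necessity direction, precisely the sequence of norms $\mathcal{N}=\{\|\cdot\|_n\}$ of Example \ref{ex-norma-compatibila-crestere} (the ``growth'' norm defined by (\ref{norma-dicho-crestere})), and in both directions to route everything through the already-established characterization of Theorem \ref{thm-dicho cu strong crestere}, i.e. through the inequalities $(hd_2),(kd_2)$. Since $(h,k)-d.$ implies $(h,k)-g.$, Example \ref{ex-norma-compatibila-crestere} already guarantees that this $\mathcal{N}$ is compatible with $(P_n)$, so the only remaining work for necessity is to verify $(hd_5)$ and $(kd_5)$; for sufficiency the work is to recover $(hd_2),(kd_2)$ from $(hd_5),(kd_5)$ using compatibility. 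The argument thus parallels the proofs of Theorems \ref{teorem-unif=neunif} and \ref{thm-unif-neunif-fara-proiectori-in-dreapta}, the difference being that the growth norm forces the nonuniform factors $s_n$ to reappear on the right-hand sides.

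For necessity the key is to compute the two norms explicitly using strong invariance. First I would note that $A_m^nP_nx$ lies in the range of $P_m$ (since $P_mA_m^nP_nx=A_m^nP_nx$ and $Q_mA_m^nP_nx=A_m^nQ_nP_nx=0$), so that
\begin{align*}
\|A_m^nP_nx\|_m=\sup_{j\geq m}\frac{h_m}{h_j}\|A_j^nP_nx\|.
\end{align*}
Applying $(hd_1)$ in the form $\|A_j^nP_nx\|\leq d_n\frac{h_n}{h_j}\|P_nx\|$ and using that $h$ is increasing, the bound for the supremum over $j\geq m$ is attained at $j=m$, giving $\|A_m^nP_nx\|_m\leq d_n\frac{h_n}{h_m}\|P_nx\|$; multiplying by $h_m$ and using $\|P_nx\|\leq\|x\|_n$ (immediate from the definition of $\mathcal{N}$) yields $(hd_5)$ with $s_n=d_n$. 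Symmetrically, using Remark \ref{rem1} (the identities $B_m^nQ_m=Q_nB_m^nQ_m$ and $B_m^pQ_m=B_n^pB_m^nQ_m$), the vector $B_m^nQ_mx$ lies in the range of $Q_n$ and
\begin{align*}
\|B_m^nQ_mx\|_n=\sup_{p\leq n}\frac{k_p}{k_n}\|B_m^pQ_mx\|.
\end{align*}
Bounding $\|B_m^pQ_mx\|$ by the $(kd_2)$ inequality of Theorem \ref{thm-dicho cu strong crestere} (equivalently, by $(kd_1)$ together with the identity $A_m^pB_m^pQ_m=Q_m$) and again using monotonicity of $k$ to evaluate the supremum at $p=n$, I obtain $\|B_m^nQ_mx\|_n\leq s_m\frac{k_n}{k_m}\|x\|$, whence $(kd_5)$ after multiplying by $k_m$ and using $\|x\|\leq\|x\|_m$.

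For sufficiency I would invoke the compatibility of $\mathcal{N}$ in the two-sided form $\|x\|\leq\|x\|_n\leq c^1_n\|x\|$ of Remark \ref{rem-norma-compatibila-fara-proiectori}. Sandwiching $(hd_5)$ between these bounds gives
\begin{align*}
h_m\|A_m^nP_nx\|\leq h_m\|A_m^nP_nx\|_m\leq s_nh_n\|x\|_n\leq c^1_ns_nh_n\|x\|,
\end{align*}
which is exactly $(hd_2)$ with the nondecreasing sequence $c^1_ns_n$; the analogous sandwiching of $(kd_5)$ yields $(kd_2)$ with $c^1_ms_m$, and Theorem \ref{thm-dicho cu strong crestere} then delivers $(h,k)-d.$ I expect the main obstacle to be the bookkeeping in the necessity part: correctly collapsing the two suprema via the isomorphism identities of Remark \ref{rem1} so that $A_m^nP_nx$ and $B_m^nQ_mx$ are recognized as living in the ranges of $P_m$ and $Q_n$ respectively, and then checking that the surviving nonuniform factor is $s_n$ in $(hd_5)$ but $s_m$ in $(kd_5)$ — that is, that the two suprema are genuinely controlled by the monotonicity of $h$ and of $k$ in opposite directions.
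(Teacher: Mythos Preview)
Your sufficiency argument is essentially identical to the paper's: sandwich $(hd_5),(kd_5)$ between $\|\cdot\|\leq\|\cdot\|_n\leq c^1_n\|\cdot\|$ to recover $(hd_2),(kd_2)$ and conclude via Theorem \ref{thm-dicho cu strong crestere}.

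For necessity you take a genuinely different route. The paper does not use the growth norm of Example \ref{ex-norma-compatibila-crestere} at all here; it simply cites Theorem \ref{thm-unif-neunif-fara-proiectori-in-dreapta}, which (via the \emph{dichotomy} norm (\ref{norma-dicho-dichotomie}) of Example \ref{ex-norma-compatibila-dichotomie}) already gives $(hd_4),(kd_4)$, i.e. $(hd_5),(kd_5)$ with $s_n\equiv 1$. Your choice of the growth norm (\ref{norma-dicho-crestere}) is also legitimate: the collapse of the two suprema via $Q_mA_m^nP_n=0$, $P_nB_m^nQ_m=0$, and the identities of Remark \ref{rem1} is correct, and the estimates
\[
\frac{h_m}{h_j}\|A_j^nP_nx\|\leq d_n\frac{h_mh_n}{h_j^2}\|P_nx\|\leq d_n\frac{h_n}{h_m}\|P_nx\|,\qquad
\frac{k_p}{k_n}\|B_m^pQ_mx\|\leq s_m\frac{k_p^2}{k_nk_m}\|x\|\leq s_m\frac{k_n}{k_m}\|x\|
\]
(for $j\geq m$, $p\leq n$) do give $(hd_5),(kd_5)$ with a nonconstant $s$. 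The paper's route is considerably shorter and yields the sharper constant $s_n=1$; your route is more self-contained and shows that the growth norm, not just the dichotomy norm, witnesses the characterization. One small bookkeeping point: in $(hd_5)$ you end with the factor $d_n$ from $(hd_1)$, while in $(kd_5)$ you use the $s_m$ of Theorem \ref{thm-dicho cu strong crestere}; to produce a single nondecreasing sequence just take the pointwise maximum.
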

\begin{proof}
\textit{Necessity:}
It results from Theorem \ref{thm-unif-neunif-fara-proiectori-in-dreapta} for $s_n=1.$

\textit{Sufficiency:}
By (\ref{norma-compatibila-tare}) and the inequalities $(hd_5),(kd_5)$ there exist two nondecreasing sequences $(s_n),(c^1_n)\geq 1$ such that:
\begin{equation*}
\|A_m^nP_nx\|\leq \|P_m A_m^n P_nx\|_m\leq s_n\frac{h_n}{h_m}\|x\|_n\leq s_n c^1_n\frac{h_n}{h_m}\|x\|
\end{equation*}
and
\begin{equation*}
\|B_m^nQ_mx\|\leq \|Q_n B_m^n Q_mx\|_n\leq s_m\frac{k_n}{k_m}\|x\|_m\leq s_m c^1_m\frac{k_n}{k_m}\|x\|.
\end{equation*}
Thus the pair $(\mathcal{A},P)$ is $(h,k)-d.$
\qed 
\end{proof}
\begin{corollary}
Let $(P_n)$ be strongly invariant for the discrete system $(\mathcal{A})$.
The pair $(\mathcal{A},P)$ is $e.d.$ if and only if there exist a sequence of norms $\mathcal{N}=\{\|\cdot\|_n,n\in\mathbb{N}\}$ compatible with $(P_n)$, the real constants $\alpha,\beta>0$ and a nondecreasing map $s:\mathbb{N}\to[1,\infty)$ such that
\begin{enumerate}
	\item[$(ed_1')$] $\|A_m^nP_nx\|_m\leq s_n e^{-\alpha(m-n)}\|x\|_n$
	\item[$(ed_2')$] $ \|B_m^nQ_mx\|_n\leq s_m e^{-\beta(m-n)}\|x\|_m,$
\end{enumerate}
for all $ (m,n,x)\in\Delta\times X$, where $(Q_n)$ is the complementary sequence of $(P_n)$.	
\end{corollary}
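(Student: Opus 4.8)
The plan is to obtain this corollary as a direct specialization of Theorem~\ref{torema-2}. By definition, the pair $(\mathcal{A},P)$ is $e.d.$ precisely when it is $(h,k)-d.$ for the particular growth rates $h_m=e^{\alpha m}$ and $k_m=e^{\beta m}$ with $\alpha,\beta>0$. First I would record that these are admissible growth rates in the sense of the earlier definition: since $\alpha,\beta>0$, each sequence is increasing, satisfies $h_0=k_0=1\geq 1$, and tends to $\infty$. Hence Theorem~\ref{torema-2} applies verbatim to this pair of rates.

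Next I would invoke Theorem~\ref{torema-2} itself, which asserts that $(\mathcal{A},P)$ is $(h,k)-d.$ if and only if there exist a sequence of norms $\mathcal{N}=\{\|\cdot\|_n,n\in\mathbb{N}\}$ compatible with $(P_n)$ and a nondecreasing sequence $s:\mathbb{N}\to[1,\infty)$ for which $(hd_5)$ and $(kd_5)$ hold. It then remains only to rewrite these two inequalities under the exponential substitution. Dividing $(hd_5)$ by $h_m=e^{\alpha m}$ and using $h_n/h_m=e^{-\alpha(m-n)}$ transforms it into $(ed_1')$; dividing $(kd_5)$ by $k_m=e^{\beta m}$ and using $k_n/k_m=e^{-\beta(m-n)}$ transforms it into $(ed_2')$.

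Because each of these manipulations is an equality-preserving rearrangement, both implications of the corollary follow simultaneously from the two directions of Theorem~\ref{torema-2}, with the same compatible norm $\mathcal{N}$ and the same sequence $s$. I anticipate no real obstacle in this argument: it is a routine substitution, and the only step deserving explicit mention is the verification that the exponential sequences are genuine growth rates, which holds exactly because $\alpha,\beta>0$.
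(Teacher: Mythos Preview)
Your proposal is correct and is precisely the intended argument: the corollary is an immediate specialization of Theorem~\ref{torema-2} to the exponential rates $h_m=e^{\alpha m}$, $k_m=e^{\beta m}$, and the paper presents it without separate proof for exactly this reason. The only cosmetic remark is that your verification $h_0=k_0=1$ is a bit stronger than needed, since the definition of a growth rate only requires $h_n\geq 1$, not $h_0=1$.
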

\begin{corollary}
Let $(P_n)$ be strongly invariant for the discrete system $(\mathcal{A})$.
The pair $(\mathcal{A},P)$ is $p.d.$ if and only if there exist a sequence of norms $\mathcal{N}=\{\|\cdot\|_n,n\in\mathbb{N}\}$ compatible with $(P_n)$, the real constants $\alpha,\beta>0$ and a nondecreasing map $s:\mathbb{N}\to[1,\infty)$ such that
\begin{enumerate}
	\item[$(pd_1')$] $(m+1)^\alpha\|A_m^nP_nx\|_m\leq  s_n (n+1)^\alpha\|x\|_n$
	\item[$(pd_2')$] $ (m+1)^\beta\|B_m^nQ_mx\|_n\leq s_m (n+1)^\beta\|x\|_m,$
\end{enumerate}
for all $ (m,n,x)\in\Delta\times X$, where $(Q_n)$ is the complementary sequence of $(P_n)$.
\end{corollary}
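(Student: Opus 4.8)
The plan is to obtain this corollary as an immediate specialization of Theorem~\ref{torema-2}. Recall from the remark following Definition~\ref{def1} that polynomial dichotomy means $(h,k)$-dichotomy for growth rates of the special form $h_m=(m+1)^\alpha$ and $k_m=(m+1)^\beta$ with $\alpha,\beta>0$; thus $(\mathcal{A},P)$ is $p.d.$ precisely when there exist $\alpha,\beta>0$ for which the pair is $(h,k)$-dichotomic with these rates. The strategy is therefore to fix such exponents, verify that the corresponding sequences are admissible growth rates, and then read off the conclusion of Theorem~\ref{torema-2}.

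First I would check that $h_m=(m+1)^\alpha$ and $k_m=(m+1)^\beta$ satisfy the axioms of a growth rate. For $\alpha>0$ we have $h_0=1\geq 1$, the map $m\mapsto(m+1)^\alpha$ is strictly increasing, and $\lim_{m\to\infty}(m+1)^\alpha=\infty$; the same holds for $k$ with $\beta>0$. Hence both sequences lie in the class of growth rates to which Theorem~\ref{torema-2} applies, and $(P_n)$ is strongly invariant by hypothesis.

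Next I would apply Theorem~\ref{torema-2} verbatim. For the chosen $\alpha,\beta$, that theorem states that $(\mathcal{A},P)$ is $(h,k)$-dichotomic if and only if there exist a sequence of norms $\mathcal{N}$ compatible with $(P_n)$ and a nondecreasing map $s:\mathbb{N}\to[1,\infty)$ such that $(hd_5)$ and $(kd_5)$ hold. Substituting the polynomial rates, the inequality $(hd_5)$, namely $h_m\|A_m^nP_nx\|_m\leq s_n h_n\|x\|_n$, becomes $(m+1)^\alpha\|A_m^nP_nx\|_m\leq s_n(n+1)^\alpha\|x\|_n$, which is exactly $(pd_1')$; likewise $(kd_5)$ becomes $(m+1)^\beta\|B_m^nQ_mx\|_n\leq s_m(n+1)^\beta\|x\|_m$, which is exactly $(pd_2')$. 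Reinstating the existential quantifier over $\alpha,\beta$ on both sides of the equivalence then yields the stated characterization of $p.d.$

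There is essentially no analytic obstacle here, since the entire content is carried by Theorem~\ref{torema-2} and the corollary is a notational instantiation. The only point requiring a moment's care is confirming that the polynomial sequences are genuine growth rates, so that the hypotheses of Theorem~\ref{torema-2} are met; once that is in place, both directions of the equivalence follow with no further computation.
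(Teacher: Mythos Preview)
Your proposal is correct and matches the paper's approach: the corollary is stated immediately after Theorem~\ref{torema-2} with no separate proof, so it is intended precisely as the specialization $h_m=(m+1)^\alpha$, $k_m=(m+1)^\beta$ that you carry out. Your handling of the existential quantifier over $\alpha,\beta$ is the only point beyond a literal substitution, and you treat it correctly.
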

\section{Applications to characterizations of $(h,k)-$ dichotomy with sequences of norms }
	The aim of this section is to provide a characterization of the concept with Barbashin type theorem and necessary and sufficient conditions of Datko type (for exponential case see \cite{popa} and the reference therein), with sequences of norms. 
	
	We denote by $\mathcal{\tilde{H}}$ the set of growth rates $h:\mathbb{N}\to[1,\infty)$ for which there exist a sequence $\tilde{h}:\mathbb{N}\to(0,\infty)$ and a constant $H\geq 1$ such that 
	\begin{equation}
	\sum_{n=0}^{\infty}\frac{\tilde{h}_n}{h_n}\leq H.
	\end{equation} 
	It is easy to see that if $h$ is an exponential rate (i.e. $h_n=e^{\alpha n}$ with $\alpha>0$) or a polynomial rate (i.e. $h_n=(n+1)^\alpha$ with $\alpha>0$) then $h\in\mathcal{\tilde{H}}$.
	
	A characterization of Barbashin type for $(h,k)-$ dichotomy in terms of Lyapunov sequences is given by
\begin{theorem}
	Let $P:\mathbb{N}\to\mathcal{B}(X)$ be a sequence of projectors strongly invariant for the system $(\mathcal{A})$. The pair $(\mathcal{A},P)$ is $(h,k)-d.$ if and only if there exist a sequence of norms $\mathcal{N}_1=\{\||\cdot\||_n,n\in\mathbb{N}\}$ compatible with $(P_n)$ and a nondecreasing sequence $b:\mathbb{N}\to[1,\infty)$ such that:
	\begin{enumerate}
		\item [$({h}d_6)$]
		\begin{equation*}
		\sum_{j=0}^{n}h_m\||A_m^jP_jx\||_m\leq b_nh_n\||x\||_n,
		\end{equation*}
		\item [$(kd_6)$]
		\begin{equation*}
		\sum_{j=0}^{n}\frac{\||B_m^jQ_mx\||_j}{k_j}\leq b_m\frac{\||x\||_m}{k_m},
		\end{equation*}
		for all $(m,n,x)\in\Delta\times X.$
	\end{enumerate}
\end{theorem}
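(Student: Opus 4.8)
The plan is to prove the two implications separately and to route both through Theorem~\ref{torema-2}, whose inequalities $(hd_5),(kd_5)$ are exactly the $j=n$ terms of $(hd_6),(kd_6)$.

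\emph{Sufficiency} should be almost immediate. Since every summand in $(hd_6)$ and $(kd_6)$ is nonnegative, discarding all terms except $j=n$ yields
\begin{align*}
h_m\||A_m^nP_nx\||_m\leq b_n h_n\||x\||_n,\qquad k_m\||B_m^nQ_mx\||_n\leq b_m k_n\||x\||_m,
\end{align*}
which are precisely $(hd_5),(kd_5)$ for the compatible norm $\mathcal{N}=\mathcal{N}_1$ and $s=b$. Theorem~\ref{torema-2} then gives that $(\mathcal{A},P)$ is $(h,k)-d.$

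\emph{Necessity} is where the work lies. I would take the concrete compatible norm
\[
\||x\||_n=\sup_{i\geq n}\frac{h_i}{h_n}\|A_i^nP_nx\|+\sup_{p\leq n}\frac{k_n}{k_p}\|B_n^pQ_nx\|
\]
of Example~\ref{ex-norma-compatibila-dichotomie}, and use Theorem~\ref{thm-dicho cu strong crestere} to fix a nondecreasing $s$ satisfying $(hd_2),(kd_2)$. For $(hd_6)$ I would observe that $A_m^jP_jx$ lies in the range of $P_m$, so the $Q$-part of $\||\cdot\||_m$ drops out and, by $A_i^mA_m^j=A_i^j$,
\[
h_m\||A_m^jP_jx\||_m=\sup_{i\geq m}h_i\|A_i^jP_jx\|\leq s_jh_j\|x\|,
\]
the last step being $(hd_2)$. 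Summing over $0\leq j\leq n$, using $s_jh_j\leq s_nh_n$ and $\|x\|\leq\||x\||_n$, produces $(hd_6)$ with $b_n=(n+1)s_n$.

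For $(kd_6)$ I would argue dually, and this is the subtle step. Since $B_m^jQ_mx$ lies in the range of $Q_j$, the $P$-part of $\||\cdot\||_j$ drops out, and the skew-evolution identities of Remark~\ref{rem1} (namely $Q_jB_m^jQ_m=B_m^jQ_m$ and $B_j^pB_m^jQ_m=B_m^pQ_m$) collapse the norm to
\[
\frac{\||B_m^jQ_mx\||_j}{k_j}=\sup_{p\leq j}\frac{\|B_m^pQ_mx\|}{k_p}\leq\frac{s_m}{k_m}\|x\|,
\]
the final inequality being $(kd_2)$. The key point is that this bound is independent of $j$; hence the finite sum over $0\leq j\leq n$ costs only the factor $n+1\leq m+1$, and with $\|x\|\leq\||x\||_m$ one obtains $(kd_6)$ with the same choice $b_m=(m+1)s_m$, a nondecreasing sequence with values in $[1,\infty)$. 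The hard part will be precisely this $(kd_6)$ reduction: one must apply the cocycle relations for the skew-evolution operator correctly to rewrite $\||B_m^jQ_mx\||_j$ as a supremum of $\|B_m^pQ_mx\|/k_p$, and then notice that after dividing by $k_j$ the summand is controlled uniformly in $j$, so that the nonuniform expansion estimate survives the finite summation at only the harmless cost of $m+1$. The $(hd_6)$ side and the entire sufficiency direction are routine by comparison.
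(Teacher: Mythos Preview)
Your proposal is correct, and its global architecture coincides with the paper's: for sufficiency keep only the $j=n$ term and invoke Theorem~\ref{torema-2}; for necessity take the concrete Lyapunov norm of Example~\ref{ex-norma-compatibila-dichotomie}, bound each summand, and pay the factor $n+1\leq m+1$ for the finite sum.

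The one real difference is in how the termwise bounds are obtained. You route through Theorem~\ref{thm-dicho cu strong crestere} and unpack $\||A_m^jP_jx\||_m$ and $\||B_m^jQ_mx\||_j$ explicitly using the orthogonality of $P_m,Q_m$ and the cocycle identities of Remark~\ref{rem1}, reducing to $(hd_2),(kd_2)$. The paper instead cites Theorem~\ref{thm-unif-neunif-fara-proiectori-in-dreapta} to get $(hd_4),(kd_4)$ directly at the level of the Lyapunov norm. This makes the paper's argument noticeably shorter: $(kd_4)$ gives the $j$-independent bound $\frac{\||B_m^jQ_mx\||_j}{k_j}\leq\frac{\||x\||_m}{k_m}$ in one stroke, so what you flag as ``the hard part'' is in fact the easier of the two summations there; and for $(hd_6)$ the paper uses $(hd_4)$ plus the compatibility bound $\||x\||_j\leq c_j^1\|x\|$ rather than your direct sup-computation. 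Your unpacking essentially re-derives pieces of the proof of Theorem~\ref{thm-unif-neunif-fara-proiectori-in-dreapta} on the fly; it is self-contained but longer, while the paper's citation is terser and yields the slightly sharper constant $b_n=(n+1)c_n^1$ versus your $b_n=(n+1)s_n$.
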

\begin{proof}\textit{Necessity:}
If we assume that the pair $(\mathcal{A},P)$ is $(h,k)-d.$ with $(P_n)$ strongly invariant for $(\mathcal{A})$ then by Example \ref{ex-norma-compatibila-dichotomie} there exists a sequence of norms 

$\mathcal{N}_1=\{\||\cdot\||_n,n\in\mathbb{N}\}$ compatible with $(P_n)$ and by Theorem \ref{thm-unif-neunif-fara-proiectori-in-dreapta}  the inequalities $(hd_4),(kd_4)$ are satisfied. By Remark \ref{norma-compatibila-tare} there exists a nondecreasing sequence $c^1_n\geq 1$ such that:
\begin{align*}
\sum_{j=0}^{n}{h}_m\||A_m^jP_jx\||_m&\leq\sum_{j=0}^{n}{h}_j\||x\||_j\leq h_n \sum_{j=0}^{n}c^1_j\|x\|
\leq (n+1)c^1_n h_n\|x\|\leq b_nh_n\||x\||_n\\
\text{  and  }\\
\sum_{j=0}^{n}\frac{\||B_m^jQ_mx\||_j}{k_j}&\leq(n+1)\frac{\||x\||_m}{k_m}\leq(m+1)\frac{\||x\||_m}{k_m}\leq b_m\frac{\||x\||_m}{k_m},
\end{align*}
for all $(n,x)\in\mathbb{N}\times X$, where $b_n=(1+n)c_n^1\geq 1$.

\textit{Sufficiency:}	It results for $j=n$ in $(hd_6),(kd_6)$ and Theorem \ref{torema-2}.
\qed 
	\end{proof}
A necessary condition of Datko-type for $(h,k)-$ dichotomy is
\begin{theorem}
			Let $P:\mathbb{N}\to\mathcal{B}(X)$ be a sequence of projectors strongly invariant for the system $(\mathcal{A})$. If the pair $(\mathcal{A},P)$ is $(h,k)-d.$ with $h\in\mathcal{H}$ then there exist a sequence of norms $\mathcal{N}_1=\{\||\cdot\||_n,n\in\mathbb{N}\}$ compatible with $(P_n)$ and a nondecreasing sequence $d:\mathbb{N}\to[1,\infty)$ such that:
	\begin{enumerate}
		\item [$({\tilde{h}}d_7)$]
		\begin{equation*}
		\sum_{j=n}^{m}\tilde{h}_j\||A_j^nP_nx\||_j\leq d_n\tilde{h}_n\||x\||_n,
		\end{equation*}
		\item [$({k}d_7)$]
		\begin{equation*}
		\sum_{j=n}^{m}{k}_j\||B_j^nQ_jx\||_n\leq d_m{k}_n\||x\||_m,
		\end{equation*}
		for all $(m,n,x)\in\Delta\times X.$
	\end{enumerate}
\end{theorem}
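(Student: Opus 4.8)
The plan is to recycle the compatible norm already constructed for a dichotomic pair and then estimate the two partial sums by the \emph{unweighted} uniform inequalities $(hd_4)$ and $(kd_4)$. Since $(\mathcal{A},P)$ is $(h,k)-d.$ with $(P_n)$ strongly invariant, Example \ref{ex-norma-compatibila-dichotomie} supplies the norm $\mathcal{N}_1$ of (\ref{norma-dicho-dichotomie}), compatible with $(P_n)$, and Theorem \ref{thm-unif-neunif-fara-proiectori-in-dreapta} guarantees that $(hd_4)$ and $(kd_4)$ hold for exactly this norm. I would keep this $\mathcal{N}_1$ throughout, so that the whole problem collapses to exhibiting a single nondecreasing $d:\mathbb{N}\to[1,\infty)$ for which $(\tilde{h}d_7)$ and $(kd_7)$ follow.

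For the stable sum $(\tilde{h}d_7)$, I would first rewrite $(hd_4)$, applied to $(j,n)\in\Delta$ with $n\le j\le m$, as $\||A_j^nP_nx\||_j\le (h_n/h_j)\||x\||_n$. Multiplying by $\tilde{h}_j$ and summing over $j$ from $n$ to $m$ gives
$$\sum_{j=n}^{m}\tilde{h}_j\||A_j^nP_nx\||_j\le h_n\||x\||_n\sum_{j=n}^{m}\frac{\tilde{h}_j}{h_j}\le H\,h_n\||x\||_n,$$
where the last step uses precisely the summability $\sum_{j\ge 0}\tilde{h}_j/h_j\le H$ provided by the assumption $h\in\tilde{\mathcal{H}}$. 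This is the crucial point: a bare truncation at $m$ would produce a bound growing in $m$, whereas the convergence of $\sum\tilde{h}_j/h_j$ yields a bound uniform in $m$. It then remains to dominate $H h_n$ by $d_n\tilde{h}_n$, so I would set $d_n^{(h)}=\max\{1,\sup_{i\le n}H h_i/\tilde{h}_i\}$, which is nondecreasing, at least $1$, finite for each $n$, and satisfies $Hh_n\le d_n^{(h)}\tilde{h}_n$; with this choice $(\tilde{h}d_7)$ holds.

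For the unstable sum $(kd_7)$ no summability is needed. Applying $(kd_4)$ to $(j,n)\in\Delta$ gives $k_j\||B_j^nQ_jx\||_n\le k_n\||x\||_j$, and summing over $j$ from $n$ to $m$ yields $\sum_{j=n}^{m}k_j\||B_j^nQ_jx\||_n\le k_n\sum_{j=n}^{m}\||x\||_j$. Here I would estimate every term by its right endpoint through the compatibility inequality $\||x\||_j\le c^1_j\|x\|$ of Remark \ref{rem-norma-compatibila-fara-proiectori}, using monotonicity of $(c^1_j)$ and the lower bound $\|x\|\le\||x\||_m$, so that $\||x\||_j\le c^1_m\||x\||_m$ for $j\le m$. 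Hence $\sum_{j=n}^{m}\||x\||_j\le (m+1)c^1_m\||x\||_m$, and $(kd_7)$ holds with $d_m^{(k)}=(m+1)c^1_m$. Taking $d_\ell=\max\{d_\ell^{(h)},d_\ell^{(k)}\}$ gives a single nondecreasing sequence $\ge 1$ fulfilling both inequalities.

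The main obstacle, and the only place where the hypothesis on $h$ is genuinely used, is securing the $m$-uniform bound in $(\tilde{h}d_7)$; everything else is a routine combination of the already-established $(hd_4)$, $(kd_4)$ and the compatibility estimates. A secondary technical point I would flag is matching the weight $\tilde{h}_n$ on the right of $(\tilde{h}d_7)$ with the factor $h_n$ that the computation naturally produces, which is what forces the slightly unusual choice of $d_n^{(h)}$ above in place of a constant.
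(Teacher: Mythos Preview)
Your proposal is correct and follows essentially the same route as the paper: invoke the compatible norm from Example~\ref{ex-norma-compatibila-dichotomie}, use $(hd_4)$ together with $\sum \tilde h_j/h_j\le H$ for the stable sum, and use $(kd_4)$ together with the compatibility bound $\||x\||_j\le c^1_j\|x\|$ and $\|x\|\le\||x\||_m$ for the unstable sum. If anything, you are more careful than the paper in taking $d_n^{(h)}=\max\{1,\sup_{i\le n}Hh_i/\tilde h_i\}$ so that the resulting sequence $d$ is genuinely nondecreasing, whereas the paper's displayed choice $d_n=H(c_n^1+h_n/\tilde h_n)$ need not be.
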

\begin{proof}
	By Theorem \ref{thm-unif-neunif-fara-proiectori-in-dreapta} and the inequalities (\ref{norma-compatibila-tare}) it follows that there exists a nondecreasing sequence $c_n^1\geq 1$ such that
\begin{align*}
\sum_{j=n}^{m}\tilde{h}_j\||A_j^nP_nx\||_j&\leq {h_n}\||x\||_n \sum_{j=n}^{m}\frac{\tilde{h}_j}{h_j}\leq H \frac{h_n}{\tilde{h}_n}\tilde{h}_n \||x\||_n
\leq d_n {h}_n\||x\||_n\\
\text{and}\\
\sum_{j=n}^{m}{k}_j\||B_j^nQ_jx\||_n&\leq \sum_{j=n}^{m}{k_n}\||x\||_j\leq k_n \sum_{j=n}^{m} c_j^1\|x\| \leq c^1_m(D+1)k_n\|x\|\\
&\leq d_m {k}_n\||x\||_m,
\end{align*}	for all $(m,n,x)\in\Delta\times X,$ where $d_n=H(c_n^1+\frac{h_n}{\tilde{h}_n}).$\qed

\end{proof}
A sufficient condition of Datko-type for $(h,k)-$ dichotomy is
\begin{theorem}
		Let $P:\mathbb{N}\to\mathcal{B}(X)$ be a sequence of projectors strongly invariant for the system $(\mathcal{A})$. If there exist a sequence of norms $\mathcal{N}_1=\{\||\cdot\||_n,n\in\mathbb{N}\}$ compatible with $(P_n)$ and a nondecreasing sequence $d:\mathbb{N}\to[1,\infty)$ such that the inequalities $(hd_7)$ and $(kd_7)$ are satisfied then the pair $(\mathcal{A},P)$ is $(h,k)-d.$
\end{theorem}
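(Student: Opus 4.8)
My plan is to deduce the result directly from the sufficiency half of Theorem~\ref{torema-2}. Since the sequence of norms $\mathcal{N}_1=\{\||\cdot\||_n,n\in\mathbb{N}\}$ is assumed to be compatible with $(P_n)$, it already qualifies as an admissible sequence of norms for that theorem; hence it suffices to manufacture, out of $(hd_7)$ and $(kd_7)$, a nondecreasing sequence $s:\mathbb{N}\to[1,\infty)$ for which the uniform estimates $(hd_5)$ and $(kd_5)$ hold with respect to $\mathcal{N}_1$. Once these are in place, Theorem~\ref{torema-2} yields that $(\mathcal{A},P)$ is $(h,k)-d.$

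The single idea needed is that all summands appearing in $(hd_7)$ and $(kd_7)$ are nonnegative, so each individual term of a finite sum is bounded above by the whole sum. I would isolate the extreme index $j=m$. Taking $j=m$ in $(hd_7)$ gives
\[
h_m\||A_m^nP_nx\||_m\le\sum_{j=n}^{m}h_j\||A_j^nP_nx\||_j\le d_n\,h_n\||x\||_n,
\]
which is exactly $(hd_5)$ for the norm $\mathcal{N}_1$ with $s_n=d_n$. In the same way, taking $j=m$ in $(kd_7)$ produces
\[
k_m\||B_m^nQ_mx\||_n\le\sum_{j=n}^{m}k_j\||B_j^nQ_jx\||_n\le d_m\,k_n\||x\||_m,
\]
i.e. $(kd_5)$ with $s_m=d_m$. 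Both chains hold for every $(m,n,x)\in\Delta\times X$, and $s_n=d_n$ is nondecreasing with values in $[1,\infty)$ because $d$ is.

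What makes this work, and the point I would check most carefully, is the matching of decay rates: the weights inside the two sums are precisely $h_j$ and $k_j$, so the top summand reproduces exactly the left-hand sides $h_m\||A_m^nP_nx\||_m$ and $k_m\||B_m^nQ_mx\||_n$ demanded by $(hd_5)$ and $(kd_5)$. Had the stable sum carried the slower weight $\tilde h_j$ (as in the necessary Datko condition), the $j=m$ term would only yield $\tilde h$-decay and this shortcut would break down, so it is essential that $(hd_7)$ is the $h$-weighted inequality. Apart from this index bookkeeping there is no genuine analytic obstacle: the summability hypothesis already contains the required pointwise bound as a single one of its nonnegative terms, so the argument reduces to a one-line extraction followed by an appeal to Theorem~\ref{torema-2}, exactly parallel to the sufficiency part of the preceding Barbashin-type theorem, where one instead specialised the sums at $j=n$.
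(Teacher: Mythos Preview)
Your proof is correct and follows exactly the paper's approach: the paper's proof consists of the single line ``It results taking $j=m$ in $(hd_7),(kd_7)$ and by Theorem~\ref{torema-2},'' which is precisely the extraction-of-the-top-summand argument you wrote out in detail. Your observation that the $h$-weighted (rather than $\tilde h$-weighted) form of $(hd_7)$ is what makes the $j=m$ specialisation land on $(hd_5)$ is the only subtlety, and you identified it correctly.
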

\begin{proof}
It results taking $j=m$ in $(hd_7),(kd_7)$ and by Theorem \ref{torema-2}.
\qed 
	\end{proof}
\begin{acknowledgement}
The authors would like to express their deep gratitude to Professor Emeritus Mihail Megan  for his valuable and constructive suggestions and  useful critiques during the planning and development of this research work. 
\end{acknowledgement}
%
%
%

\end{document}